\ifpdf\usepackage[colorlinks]{hyperref} 
\else\usepackage[hypertex]{hyperref}\fi 
\newcommand{\C}{{\mathbb C}}
\newcommand{\Z}{{\mathbb Z}}
\newcommand{\R}{{\mathbb R}}
\newtheorem{theorem}{Theorem}[section]
\newtheorem{lemma}[theorem]{Lemma}
\newtheorem{proposition}[theorem]{Proposition}
\theoremstyle{definition}
\begin{document}
\title{Bi-Lipschitz geometry of complex surface singularities}
\author{Lev Birbrair}
\address{Departamento de Matem\'atica, Universidade Federal do Cear\'a
(UFC), Campus do Picici, Bloco 914, Cep. 60455-760. Fortaleza-Ce,
Brasil} \email{birb@ufc.br}
\author{Alexandre Fernandes}
\address{Departamento de Matem\'atica, Universidade Federal do Cear\'a
(UFC), Campus do Picici, Bloco 914, Cep. 60455-760. Fortaleza-Ce,
Brasil} \email{alex@mat.ufc.br}
\author{Walter D.
  Neumann} \address{Department of Mathematics, Barnard College,
  Columbia University, New York, NY 10027}
\email{neumann@math.columbia.edu}

\subjclass{} \keywords{bi-Lipschitz, complex surface singularity}

\begin{abstract}
  We discuss the bi-Lipschitz geometry of an isolated singular point
  of a complex surface which particular emphasis on when it is 
  metrically conical.
\end{abstract}

\maketitle
\section{Introduction}
A very basic question in metric geometry is whether a neighborhood
of a point in an algebraic or semialgebraic set $V$ is metrically
conical, i.e., bi-Lipschitz equivalent to a metric cone (our metric is
always the ``inner metric,'' given distance within $V$, rather than
``outer metric,'' given by distance in the ambient affine space).

For real algebraic sets an extensive literature exists on local
bi-Lipschitz geometry, and failure of metric conicalness is
common. The characteristic example is the \emph{$\beta$--horn} for
$\beta=\frac pq\ge1$,
$$\{(x,y,z)\in \R^3: (x^2+y^2)^q=z^{2p},z\ge0\}\,,$$ 
which is topologically the cone on a circle but is bi-Lipschitz
classified by $\beta$ \cite{B99} and is thus not metrically
conical if $\beta>1$. The full  bi-Lipschitz classification of
germs semi-algebraic sets in this dimension was completed in \cite{B99} and
\cite{BF00}. 

However, the techniques that have been used to distinguish local
bi-Lipschitz geometry in the real algebraic case are
mostly useless in the complex case.  For example, the $\beta$--horns
can be distinguished by their \emph{volume growth number}, defined as
$$\mu(V,p):=\sup \,\Bigl\{r>0:\lim_{\epsilon\to
  0}\,\frac{\operatorname{Vol}(V\cap
  B_\epsilon(p))}{\epsilon^r}=0\Bigr\}.$$ This is a well defined
rational number for any semialgebraic germ $(V,p)$ (Lion-Rolin
\cite{LR}), and is bi-Lipschitz invariant (Birbrair and Brasselet
\cite{BB00}), and equals $\beta+1$ for the $\beta$--horn.  On the other
hand, a complex variety of dimension $n$ has volume growth number
equal to $2n$ at every point. And, in fact, complex algebraic curves are
metrically conical at every point.

It is worth stressing that the local geometry is a bi-Lipschitz
invariant of a complex analytic germ $(V,p)$ (independent of
embedding). For if one uses any set of generators of the local ring
$\mathcal O_p(V)$ to embed $(V,p)$ in some $(\C^N,0)$, then $(V,p)$
inherits a Riemannian metric at smooth points of $V$ which gives a
distance metric on $(V,p)$ that is unchanged up to bi-Lipschitz
equivalence when adding to the set of generators of $\mathcal O_p(V)$
that is used.

\medskip 
The first examples of failure of metric conicalness in the complex
setting were given by the first two authors in \cite{BF08}. They
demonstrated that the $A_k$--singularity $z^{k+1}=x^2+y^2$ is not
metrically conical for $k$ odd and $\ge3$.
$A_k$ is weighted homogeneous with weights
$(\frac{k+1}2,\frac{k+1}2,1)$.  The 
current authors showed much more generally:
\begin{theorem}[\cite{BFN08}]\label{th:1}
  A weighted homogeneous surface singularity is not metrically conical
   if its two lowest weights are distinct.
\end{theorem}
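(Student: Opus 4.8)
The plan is to disprove metric conicalness by producing \emph{fast loops}: for all small $r>0$ a loop $\gamma_r\subset V$ lying at inner distance $\sim r$ from the singular point $0$, essential in $V\setminus\{0\}$, but of length at most $C\,r^{\beta}$ for some fixed exponent $\beta>1$. The point is that a metric cone admits no such loops, so their existence is incompatible with conicalness. Since (as noted in the introduction) the inner metric is the intrinsic bi-Lipschitz datum and is invariant under embedding, it suffices to work with the restriction of the ambient Euclidean metric on the smooth part of $V\subset\C^3$.

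The first step is a cone obstruction lemma. Let $C(L)$ be the metric cone on the compact link $L$ with metric $dr^2+r^2\,g_L$, apex $o$. Radial projection $C(L)\setminus\{o\}\to L$ scales lengths on the sphere $\{r=\text{const}\}$ by $1/r$ and only increases length on radial segments, so a loop $\eta$ at radius $r$ projects to a loop $\bar\eta$ in $L$ homotopic to it with $\operatorname{length}_L(\bar\eta)\le \frac1r\operatorname{length}(\eta)$; if $\eta$ is noncontractible in $C(L)\setminus\{o\}$ then $\bar\eta$ is noncontractible in $L$, whence $\operatorname{length}(\eta)\ge r\cdot\operatorname{sys}(L)$, where $\operatorname{sys}(L)>0$ is the systole. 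Now if $\phi\co V\to C(L)$ were bi-Lipschitz with constant $K$, it would send an essential loop $\gamma_r$ at inner radius $\sim r$ to an essential loop at cone-radius $\sim r$ (inner distance to $0$ and cone distance to $o$ are comparable), giving $r\cdot\operatorname{sys}(L)\le K\operatorname{length}(\gamma_r)$. Against $\operatorname{length}(\gamma_r)\le C r^{\beta}$ with $\beta>1$ this forces $r^{1-\beta}\le CK/\operatorname{sys}(L)$, which fails as $r\to0$.

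The fast loops themselves come straight from the weighted homogeneous structure. Order the coordinates $(x_1,x_2,x_3)$ so that their weights satisfy $w_1\le w_2\le w_3$; the hypothesis is $w_1<w_2$. The surface is invariant under $\rho_t(x_1,x_2,x_3)=(t^{w_1}x_1,t^{w_2}x_2,t^{w_3}x_3)$, and for a point $P$ with $x_1$-coordinate nonzero one has $|\rho_t(P)|\sim t^{w_1}$ as $t\to0$, since the smallest weight dominates. Fix an essential loop $\gamma$ in the fibre $V\cap\{x_1=1\}$ (generic, so nonempty and missing the singular point), and set $\gamma_r=\rho_t(\gamma)$ with $t=r^{1/w_1}$. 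Each point of $\gamma_r$ has $x_1$-coordinate $t^{w_1}=r$ and bounded $x_2,x_3$, so $\gamma_r$ sits at inner radius $\sim r$; along $\gamma$ only $x_2,x_3$ vary, so the velocity of $\gamma_r$ has norm $\le C\,t^{w_2}$ (the factor $t^{w_2}\ge t^{w_3}$ dominates), giving $\operatorname{length}(\gamma_r)\le C\,t^{w_2}=C\,r^{w_2/w_1}$. Thus $\beta=w_2/w_1>1$, exactly the regime excluded by the lemma. (This recovers the known exponent: for $A_k$, weights $(2,k+1,k+1)$, one gets $\beta=\tfrac{k+1}2$.)

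The main obstacle is the essentiality of $\gamma$, i.e.\ that the fibre loop is noncontractible in $V\setminus\{0\}$ (equivalently represents a nontrivial class in $\pi_1(L)$); everything else is bookkeeping. The natural candidate for $\gamma$ is a vanishing cycle of the projection $x_1\co V\setminus\{0\}\to\C$: as the regular fibre $V\cap\{x_1=c\}$ degenerates, as $c\to0$, to the singular plane curve $V\cap\{x_1=0\}$ (singular at $0$ because $V$ has an isolated singularity there), vanishing cycles appear. The technical heart is to show such a cycle survives nontrivially in $\pi_1(L)$, which I would extract from the monodromy of this projection together with the Seifert fibration of $L$; a nonzero intersection or linking number with a dual cycle in $L$ would suffice, and detection in $H_1(L;\Q)$ is an acceptable fallback since a nontrivial homology class is still essential. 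One must also address the degenerate case in which the slice $V\cap\{x_1=0\}$ happens to be smooth, so that no vanishing cycle is produced; there one replaces the coordinate projection by a generic linear projection to the lowest-weight direction, or argues directly that smoothness of that slice is incompatible with $w_1<w_2$ and an isolated singularity. I expect this survival-of-the-cycle step to be where the real work lies.
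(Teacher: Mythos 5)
Your overall strategy coincides with the one this paper attributes to \cite{BFN08}: Theorem \ref{th:1} is proved there by exhibiting \emph{fast cycles} for $\pi_1$ of the link --- essential loops at distance $\sim r$ from the singular point of length $O(r^\beta)$ with $\beta>1$, which cannot exist in a metric cone. Your cone-obstruction lemma is fine, and your construction of the candidate loops (flow a loop $\gamma\subset V\cap\{x_1=1\}$ by the weighted action, getting distance $\sim r$ and length $O(r^{w_2/w_1})$) is exactly the right construction and is carried out correctly.

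The genuine gap is the step you yourself flag: producing a loop in the lowest-weight slice $V\cap\{x_1=1\}$ that is essential in $V\setminus\{0\}$. This is not ``bookkeeping plus one technical step''; it is the entire content of the theorem, and neither of your proposed routes closes it. First, detection in $H_1(L;\Q)$ cannot work in general: the link of a weighted homogeneous surface singularity is very often a rational homology sphere --- e.g.\ every Brieskorn sphere $\Sigma(p,q,r)$ with pairwise coprime exponents, whose weights $(qr,pr,pq)$ are pairwise distinct, has $H_1=0$ --- and this paper explicitly notes that homology is useless when the link is a homology sphere; the class must be detected in $\pi_1(L)$. Second, the intermediate statement you hope to prove is actually \emph{false} for an arbitrary weighted homogeneous presentation with $w_1<w_2$: take $V=\{x_1x_3+x_2^2=0\}$, weighted homogeneous for the weights $(1,2,3)$. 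Here the slice $V\cap\{x_1=1\}=\{x_3=-x_2^2\}\cong\C$ is simply connected, so no essential loop exists --- and none can, since after a linear change of coordinates this $V$ is the quadric cone $A_1=\C^2/\{\pm1\}$, which is homogeneous and hence metrically conical. The same example refutes your fallback claim that smoothness of $V\cap\{x_1=0\}$ is incompatible with $w_1<w_2$ and an isolated singularity, and a generic linear projection is not compatible with the scaling action, so that patch destroys the length estimate. The upshot is that the theorem implicitly concerns the canonical (normalized) weights, and any correct proof of essentialness must use the normalized Seifert/orbifold structure of the link; that is precisely what \cite{BFN08} does, and it is where all the work lies.
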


A singularity is \emph{homogeneous} if it is weighted homogeneous with
all its weights equal. Homogeneous singularities are (obviously)
metrically conical; the converse holds for cyclic quotient
singularities:
\begin{theorem}[\cite{BFN08}]\label{th:2}
  A cyclic quotient singularity $\C^2/(\Z/n)$ is metrically
  conical if and only if it is homogeneous.
\end{theorem}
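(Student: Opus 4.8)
The plan is to reduce the statement to Theorem~\ref{th:1}. The ``if'' direction requires nothing new: as already noted above, homogeneous singularities are metrically conical. For the converse I argue by contraposition, aiming to show that a non-homogeneous cyclic quotient singularity has two distinct lowest weights and hence, by Theorem~\ref{th:1}, is not metrically conical.

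First I would make the weighted homogeneous structure explicit. Writing $X=\C^2/(\Z/n)$ with generator acting by $(x,y)\mapsto(\zeta x,\zeta^{q}y)$ for a primitive $n$--th root of unity $\zeta$ and $\gcd(n,q)=1$, the coordinate ring is the monomial algebra $R=\C[x,y]^{\Z/n}$ generated by the Hilbert basis of the semigroup $\{(i,j):i+qj\equiv 0\pmod n\}$. Any positive grading $\deg x=a,\ \deg y=b$ presents $X$ as weighted homogeneous, and since $X$ is toric there is a whole family of such $\C^*$--actions; part of the work is to isolate the canonical weights to which Theorem~\ref{th:1} refers. To locate the homogeneous members of the family I would compare the degrees $na,\ nb,\ (n-q)a+b$ of the distinguished generators $x^{n},\ y^{n},\ x^{\,n-q}y$, which must all agree if $X$ is homogeneous: this forces $a=b$ and $q=1$. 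Thus $X$ is homogeneous precisely when $q=1$, that is, when it is the cone over the rational normal curve of degree $n$ --- equivalently, when its minimal resolution is a single rational curve, so that the base orbifold of the Seifert--fibered link carries no special points.

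It remains to treat $q\ne 1$, and here lies the main obstacle: non-homogeneity does \emph{not} by itself guarantee that the two lowest weights differ. In the balanced cases the smallest weight is attained more than once; the simplest example is the quotient of $\C^2$ by the order--$8$ action generated by $(x,y)\mapsto(\zeta x,\zeta^{5}y)$, whose two minimal generators $x^{3}y$ and $xy^{3}$ share the lowest weight, so that Theorem~\ref{th:1} does not apply verbatim even though $X$ is not homogeneous. Such singularities are exactly those for which the minimal total degree in the semigroup is attained twice, a coincidence controlled by $\gcd(n,q-1)$, and they must be dealt with by hand. I would handle them by a direct bi-Lipschitz estimate in the spirit of the computation for the $A_k$--singularities in \cite{BF08}, exploiting that the higher weights still differ from the lowest one; showing that this disparity already defeats metric conicality is the crux of the argument. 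Combining the resolution of these cases with Theorem~\ref{th:1} for the remaining ones, and with the trivial ``if'' direction, then yields the stated equivalence.
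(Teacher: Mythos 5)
There is a genuine gap, and you have in fact located it yourself without closing it. Your plan is to deduce the ``only if'' direction from Theorem~\ref{th:1}, but, as you observe --- and as the paper states explicitly in the sentence immediately following Theorem~\ref{th:2} --- the two lowest weights are \emph{equal} for many non-homogeneous cyclic quotient singularities; your example, the quotient of $\C^2$ by $(x,y)\mapsto(\zeta x,\zeta^5 y)$ with $\zeta$ a primitive $8$th root of unity, whose minimal invariant generators $x^8,y^8,x^3y,xy^3$ have degrees $8,8,4,4$, is exactly such a case. This is precisely why the paper remarks that the converse of Theorem~\ref{th:1} fails: Theorem~\ref{th:2} is strictly stronger than Theorem~\ref{th:1} on cyclic quotients and cannot be obtained from it by any case analysis. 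For these balanced cases your proposal offers only a promise --- ``a direct bi-Lipschitz estimate in the spirit of the computation for the $A_k$--singularities in \cite{BF08}'' --- with no indication of what the estimate is or why it should work; but this is the entire content of the theorem in the hard case, not a technical remainder. Note moreover that the $A_k$ arguments (the separating set of \cite{BF08} and the fast cycles behind Theorem~\ref{th:1}) exploit the fact that $A_k$ has weights $\bigl(\frac{k+1}2,\frac{k+1}2,1\bigr)$ with the two \emph{lowest} weights distinct; they give no template for a singularity in which the lowest weight is attained twice, and your suggestion that one can instead exploit the gap between the lowest and the higher weights is not backed by any known mechanism.

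For comparison: the present paper does not prove Theorem~\ref{th:2} at all --- it is quoted from \cite{BFN08} --- and the introduction records how it is proved there: ``Theorems~\ref{th:1} and~\ref{th:2} are proved by exhibiting such \emph{fast cycles} for $\pi_1(M)$.'' That is, for \emph{every} non-homogeneous quotient $\C^2/(\Z/n)$ one constructs directly an essential loop in the lens-space link whose representatives shrink faster than linearly as one approaches the singular point; this single construction covers the equal-lowest-weight cases uniformly, with no reduction to Theorem~\ref{th:1}. The parts of your proposal that do work are the easy ones: the ``if'' direction (homogeneous implies conical, as the paper notes), and the identification of the homogeneous cyclic quotients as those with $q=1$ via the degrees $na$, $nb$, $(n-q)a+b$ of the invariant monomials $x^n$, $y^n$, $x^{n-q}y$ (valid, modulo justifying that any $\C^*$--action on the quotient lifts to a diagonalizable one on $\C^2$). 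Without an actual argument for the balanced non-homogeneous cases, however, the proposal does not prove the theorem.
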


Since the two lowest weights are equal to each other for many
non-homogeneous cyclic quotient singularities, the converse of Theorem
\ref{th:1} is not necessarily true.  But we will prove here a converse
to Theorem \ref{th:1} for Brieskorn hypersurfaces:
\begin{theorem}\label{th:3}
The Brieskorn singularity $$V(a,b,b):=\{(x,y,z)\in \C^3:
\alpha_0x^a+\alpha_1y^b+\alpha_2z^b=0\}$$ with $a<b$ has a metrically
conical singularity at $0$ for any $\alpha_0,\alpha_1,\alpha_2\in\C-\{0\}$.   
\end{theorem}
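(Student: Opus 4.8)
\section{Proof proposal}

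The plan is to realize $V=V(a,b,b)$ as a branched cover of a plane and to compare its inner metric with the pullback of the flat metric. Project by $\pi\co V\to\C^2$, $\pi(x,y,z)=(y,z)$. Solving the defining equation for $x$ exhibits $\pi$ as an $a$-fold cyclic branched cover, branched over the curve $B=\{\alpha_1y^b+\alpha_2z^b=0\}$, which is a union of $b$ lines through $0$. Writing $r=\sqrt{|y|^2+|z|^2}$, the equation forces $|x|=O(r^{b/a})$ on $V$ near $0$, and since $a<b$ we have $b/a>1$, so the ``extra'' coordinate $x$ is of strictly higher radial order than $y,z$. I would take as the target cone the space $(V,d_{g_0})$, where $g_0:=\pi^*(\text{flat metric on }\C^2)=|dy|^2+|dz|^2$ restricted to $TV$. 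Because $\pi$ intertwines the weighted action $\sigma_t(x,y,z)=(t^{b/a}x,ty,tz)$ with ordinary scaling on $\C^2$, and $g_0$ is exactly homogeneous of degree $2$ under $\sigma_t$, the space $(V,d_{g_0})$ is genuinely a metric cone (the cone over the link carrying its pullback metric). So it suffices to produce a bi-Lipschitz homeomorphism $(V,d_g)\to(V,d_{g_0})$, where $g$ is the induced Euclidean metric.

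Away from $B$ this is immediate. There $|x|\sim r^{b/a}$, and differentiating the defining equation gives $\partial x/\partial y,\ \partial x/\partial z=O\!\left(r^{b/a-1}\right)$, which tends to $0$ as $r\to0$. Hence on the ``generic'' region (points at transverse distance $\gtrsim r^{(b-1)/(a-1)}$ from $B$) one has $g=g_0+|dx|^2$ with the $|dx|^2$ term a relatively negligible perturbation, so the identity map is bi-Lipschitz with constants tending to $1$. The hard part is the neighborhood of $B$. There $(y,z)$ degenerate as coordinates, $x$ becomes the good transverse coordinate, and the surface is smooth; one computes $g\approx|dx|^2+|du|^2$ (with $u$ a coordinate along $B$), so the transverse $2$-discs have cone angle $2\pi$. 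By contrast, since $\pi$ has local monodromy $\Z/a$ around $B$, the transverse discs of $g_0$ have cone angle $2\pi a$. A direct check shows both transverse discs have size $\sim r$, so they differ only in cone angle. Since any two $2$-dimensional metric cones over a circle are bi-Lipschitz (adjust the angular variable by the bounded factor $a$), I would build, fiberwise over $B$, a bi-Lipschitz map straightening the $2\pi a$-cone to the $2\pi$-cone on a tubular neighborhood of transverse radius $\sim r^{(b-1)/(a-1)}$, arranged to equal the identity on the outer boundary of that neighborhood.

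Gluing the tubular maps to the identity on the generic region is Lipschitz because the two metrics already agree there up to bounded factors; concatenating with the conical identification of $(V,d_{g_0})$ then yields the desired bi-Lipschitz homeomorphism onto a metric cone, proving metric conicality. I expect the genuine obstacle to be \emph{uniformity near $B$}: one must verify that the cone-angle straightening, together with the radial reparametrization matching the two (distinct) radial profiles of the transverse discs, has bi-Lipschitz constants that are independent of $r$ as $r\to0$ and as one moves along $B$. This uniformity should follow from the weighted-homogeneous structure, since the whole configuration is $\sigma_t$-equivariant up to the controlled $t^{b/a}$ rescaling of the $x$-direction; carrying this out carefully, and confirming that the constants do not degenerate where the tube meets the generic region, is the technical heart of the argument and the step I would write out in full.
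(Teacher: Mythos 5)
Your outline reproduces the skeleton of the paper's argument: the same projection $(x,y,z)\mapsto(y,z)$ realizing $V$ as an $a$-fold cyclic cover branched over the $b$ lines $y^b=z^b$, the observation that this projection is uniformly bi-Lipschitz away from a thin neighborhood of the branch locus $B$ (the paper computes the explicit constant $\sqrt{1+b^2/a^2}$ via the gradient of the defining function), and the reduction to straightening a tubular neighborhood of each branch line, with uniformity of the straightening as the crux. But the step you describe as ``the technical heart of the argument and the step I would write out in full'' is not a loose end to be filled in later --- it \emph{is} the proof, and it is exactly the content of the paper's Lemmas \ref{le:4} and \ref{le:5}. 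The paper works in a \emph{conical} tube $C_1=\{d((y,z),B)\le\delta|(y,z)|\}$ rather than your thin tube of radius $\sim r^{(b-1)/(a-1)}$, writes down an explicit map $(x,y,z)\mapsto\bigl(\frac{y+z}{2},\, e^{i\arg x}\sqrt{|x|^2+|y-z|^2/4}\,\bigr)$, proves the fiberwise statement (the curve $u=Dx^a$ maps to a flat disc with bi-Lipschitz constant $\le a$, by an explicit comparison of the two metric tensors in polar coordinates), and then obtains uniformity along $B$ by two further gradient-comparison estimates: one comparing the surface $x^a=2buv^{b-1}$ with the product surface $x^a=2buv_0^{b-1}$, and one controlling the discarded error term $u^3g(u,v)$. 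None of these estimates appears in your proposal, and asserting that uniformity ``should follow from the weighted-homogeneous structure'' does not substitute for them; the weighted homogeneity alone does not produce the bound, since the transverse cone structure twists as $v$ varies ($D=1/(2bv^{b-1})$ depends on $v$).

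Two of your supporting claims are also wrong as stated. First, on a tube of transverse radius $\sim r^{(b-1)/(a-1)}$ the transverse discs (in either metric) have size $\sim r^{(b-1)/(a-1)}$, not $\sim r$; your two sentences contradict each other, and what the argument actually needs is that the $g$-disc and $g_0$-disc have \emph{comparable} size and controlled cone angles. Second, on your generic region the identity is \emph{not} bi-Lipschitz with constants tending to $1$: at transverse distance $\sim r^{(b-1)/(a-1)}$ from $B$ one has $|dx|\sim|du|$ (your own derivative formula gives $O(1)$ there, not $o(1)$, since $|x^{a-1}|$ is then comparable to $|y^{b-1}|$); the constant is merely bounded, which suffices but must be stated correctly. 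Finally, your gluing step is asserted rather than proved: a piecewise-defined map on a length space, bi-Lipschitz on each piece and agreeing on the interface, need not be globally bi-Lipschitz without an additional argument (quasiconvexity of the pieces or similar). The paper sidesteps exactly this issue by making both pieces conical and invoking Valette's theorem (Corollary 0.2 of \cite{valette}) that a union of metrically conical pieces glued this way is metrically conical; your proposal needs either that result or a direct replacement for it.
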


We close this introduction by sketching the known obstructions to
metric conicalness in the complex case. 

Let $M_\epsilon$ be the link of the point $p\in V$ (the boundary of
$V\cap B_\epsilon(p)$, where $B_\epsilon(p)$ is an $\epsilon$-ball in
some ambient affine space in which $(V,p)$ is embedded, $\epsilon$
sufficiently small).  A non-trivial homology class or free homotopy
class in $M_\epsilon$ will have a lower bound on the diameter of any
cycle representing it in $M_\epsilon$. In a metric cone this lower
bound will shrink at most linearly with respect to $\epsilon$ as
$\epsilon \to 0$. If such a cycle exists which shrinks faster than
linearly, it therefore obstructs metric conicalness. Theorems
\ref{th:1} and \ref{th:2} are proved by exhibiting such \emph{fast
  cycles} for $\pi_1(M)$. This is close to the ideas of metric
homology \cite{BB00,BB02}, but the link of a surface singularity may
be a homology sphere, in which case homology is not useful. Instead,
one might see this as a first step to ``metric homotopy theory'' in
bi-Lipschitz geometry.

A main tool used in \cite{BF08} was to exhibit a ``Cheeger cycle'' in
$V$ (also called ``separating set''), a codimension $1$ subset that
divides $V$ into pieces of roughly equal volume, but whose
$3$-dimensional volume shrinks faster towards $p$ than it could in a
metric cone. In \cite{BF08} the Cheeger cycle was constructed as the
union of orbits of the real points of $V$ under the $\C^*$--action, so
it was important that an appropriate real form be used. At the time it
was also only known that this Cheeger cycle obstructs $(V,p)$ being
\emph{semi-algebraicly} bi-Lipschitz equivalent to a metric cone.  We are
grateful to Bruce Kleiner for showing us how the semi-algebraic
condition on the bi-Lipschitz equivalence can be removed.

In the final section of this paper we revisit the separating set
approach, describing a more robust version using ``conflict sets.'' We
then sketch how it can be used to show that the Brian\c{c}on-Speder
family of singularities of constant topological type does not have
constant bi-Lipschitz type. The details of this argument will appear
in a future paper \cite{BFN-inprep}.  The embedded real case of this,
which is much more elementary, was proved in \cite{koike03}.

\smallskip\noindent\textbf{Acknowledgements.} The authors acknowledge
support for this research under the following grants: CNPq grant no
300985/93-2 (Birbrair), CNPq grant no 300393/2005-9 (Fernandes), NSA
grant H98230-06-1-011 and NSF grant no.\ DMS-0206464
(Neumann). Birbrair and Neumann express their gratitude to the ICTP in
Trieste for its hospitality during the final work on this paper.

\section{Conical Brieskorn singularities}
This section is devoted to the proof of Theorem \ref{th:3}; we refer
to the notation of that theorem.  Since changing the coefficients
$\alpha_i$ can be realized by a linear change of coordinates in
$\C^3$, which is  bi-Lipschitz, we can choose the
coefficients at our convenience. We choose
$$V=\{(x,y,z)\in \C^3:
x^a+y^b-z^b=0\}\,.$$ Then the projection of $V$ to the $(y,z)$-plane is
an $a$--fold branched cover branched along the lines $z=\omega y$
with $\omega\in\mu_b$, the $b$--th root of unity.  We will show that
this projection has a bounded Lipschitz constant except in a thin
neighborhood of the branch locus. The conical structure of the
$(y,z)$--plane pulls back to $V$ except in these thin
neighborhoods. We then show that these neighborhoods can be chosen
to also carry a
conical structure.

We first decompose $V$ into pieces. When we
refer to $V$ and its pieces we will really mean the germ at $0$, so
$V$ is always implicitly intersected with a small neighborhood of
$0\in \C^3$.

Our two pieces will consist of a disk-bundle neighborhood of the
branch set of the projection to the $(y,z)$--plane and the closure of
its complement, and we will show that both pieces are metrically
conical at $0$. But we start with a preliminary decomposition into two
pieces which are not metrically conical.

Our preliminary decomposition of $V$ is as follows:
\begin{align*}
  V_0&:=\{(x,y,z)\in V: |x|^{2a-2}\ge |y|^{2b-2}+|z|^{2b-2}\}\\
  V_1&:=\{(x,y,z)\in V: |x|^{2a-2}\le |y|^{2b-2}+|z|^{2b-2}\}\,.
\end{align*}
Using the fact that $x^a=-(y^b-z^b)$ on $V$, we can write this as 
\begin{align*}
  V_0&:=\{(x,y,z)\in V: |y^b-z^b|^{(2-2/a)}\ge |y|^{2b-2}+|z|^{2b-2}\}\\
  V_1&:=\{(x,y,z)\in V: |y^b-z^b|^{(2-2/a)}\le |y|^{2b-2}+|z|^{2b-2}\}\,.
\end{align*}
Denote the images of the projections to the $(y,z)$--plane by
\begin{align*}
 W_0&:=\{(y,z)\in \C^2: |y^b-z^b|^{(2-2/a)}\ge
|y|^{2b-2}+|z|^{2b-2}\}\\
 W_1&:=\{(y,z)\in \C^2: |y^b-z^b|^{(2-2/a)}\le
|y|^{2b-2}+|z|^{2b-2}\}\,.
\end{align*}
\begin{lemma}\label{le:1}
  The projection of $V$ to the $(y,z)$--plane is an $a$--fold cyclic
  covering branched along the lines $z=\omega y$ with $\omega\in
  \mu_a$ (the $a$--th roots of unity). When restricted to $V_0-\{0\}$
  it is a bi-Lipschitz unramified covering of its image
  $W_0-\{0\}$ with Lipschitz constant $\sqrt{1+\frac{ b^2}{a^2}}$.
\end{lemma}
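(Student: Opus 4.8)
The plan is to prove the two assertions separately: the covering structure, which is essentially algebraic, and then the bi-Lipschitz bound, which reduces to a pointwise derivative estimate fed into a path-length comparison. For the covering structure I would work from the relation $x^a=z^b-y^b$ valid on $V$. Over a point $(y,z)$ with $y^b\ne z^b$ the equation $x^a=z^b-y^b$ has exactly $a$ distinct solutions, permuted transitively by $x\mapsto\zeta x$ for $\zeta\in\mu_a$, so $\pi\co(x,y,z)\mapsto(y,z)$ is an $a$--fold cyclic covering (deck group $\mu_a$) away from the branch locus $\{y^b=z^b\}$, the union of the lines $z=\omega y$, $\omega\in\mu_b$. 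To see that $V_0-\{0\}$ misses this locus, observe that $y^b=z^b$ forces $x=0$, whereupon the defining inequality $|x|^{2a-2}\ge|y|^{2b-2}+|z|^{2b-2}$ of $V_0$ collapses to $0\ge|y|^{2b-2}+|z|^{2b-2}$ and hence $y=z=0$. Thus $\pi$ restricts to an honest unramified $a$--fold covering $V_0-\{0\}\to W_0-\{0\}$.

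For the metric statement, $\pi$ is the restriction of an orthogonal coordinate projection of $\C^3$, so it is $1$--Lipschitz for the ambient, and hence for the inner, metric; the content is therefore the reverse bound on the lift. On any sheet of the covering $x$ is a single-valued branch of $(z^b-y^b)^{1/a}$, and differentiating $x^a=z^b-y^b$ gives $dx=\frac{b}{a\,x^{a-1}}\bigl(z^{b-1}\,dz-y^{b-1}\,dy\bigr)$. Applying the Cauchy--Schwarz inequality in $\C^2$ to the pairing on the right yields
$$|dx|^2\le\frac{b^2}{a^2}\cdot\frac{|y|^{2b-2}+|z|^{2b-2}}{|x|^{2a-2}}\bigl(|dy|^2+|dz|^2\bigr).$$
On $V_0$ the defining inequality is precisely the assertion that $\frac{|y|^{2b-2}+|z|^{2b-2}}{|x|^{2a-2}}\le1$, so there $|dx|^2\le\frac{b^2}{a^2}\bigl(|dy|^2+|dz|^2\bigr)$, and consequently
$$|dx|^2+|dy|^2+|dz|^2\le\Bigl(1+\tfrac{b^2}{a^2}\Bigr)\bigl(|dy|^2+|dz|^2\bigr).$$
This says that at every point of $V_0-\{0\}$ the local inverse of $\pi$ has derivative of operator norm at most $\sqrt{1+b^2/a^2}$.

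To pass from this infinitesimal bound to the bi-Lipschitz statement for the inner metrics, I would lift an arbitrary path in $W_0-\{0\}$ — possible since $\pi$ is a covering there — and integrate the derivative estimate along it, so that the lifted path is at most $\sqrt{1+b^2/a^2}$ times as long; taking infima over paths gives the inner-distance comparison $d_{V_0}\le\sqrt{1+b^2/a^2}\,(d_{W_0}\circ\pi)$, while the $1$--Lipschitz projection supplies the opposite inequality. Hence $\pi|_{V_0-\{0\}}$ is a bi-Lipschitz covering with constant $\sqrt{1+b^2/a^2}$. The one point needing care is the inner-metric bookkeeping: the comparison genuinely uses the covering property, so that length-minimizing paths downstairs lift to admissible paths staying inside $V_0$, together with the observation that the derivative bound is uniform across all $a$ sheets because it depends only on $|x|$. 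I expect this path-lifting step, rather than the derivative estimate itself, to be the only real subtlety.
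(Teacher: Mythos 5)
Your proposal is correct and takes essentially the same route as the paper: both establish the branched covering via the $\mu_a$--symmetry in the $x$--coordinate, and both obtain the constant $\sqrt{1+b^2/a^2}$ from the same pointwise tangent-plane estimate, using the defining inequality of $V_0$ in exactly the same way (your Cauchy--Schwarz bound on $dx=\frac{b}{ax^{a-1}}\bigl(z^{b-1}dz-y^{b-1}dy\bigr)$ gives precisely the paper's quantity $1/|\langle u_1,u_2\rangle|$ computed from the unit normal \eqref{eq:grad}). The remaining differences are presentational rather than substantive: you replace the paper's auxiliary lemma on projections between complex hyperplanes by Cauchy--Schwarz, you spell out the path-lifting step that converts the infinitesimal bound into an inner-metric statement (the paper leaves this implicit; the key point, which your argument uses, is that $V_0$ is the full preimage of $W_0$, so lifted paths stay in $V_0$), and you correctly index the branch lines by $\mu_b$, where the lemma as stated has a typo ($\mu_a$).
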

\begin{proof}
  Write $f(x,y,z)=x^a+y^b-z^b$ so $V=f^{-1}(0)$.  That $V\to \C^2$ is
  a cyclic branched cover follows because it is the orbit map of
  $\mu_a$ acting on $V$ by multiplication in the $x$-coordinate.
  Branching is thus along $x=0$ which projects to the set $y^b=z^b$ in
  $\C^2$. This is the set $\{(y,z):z=\omega y, \omega\in \mu_a\}$. The
  restriction to $V_0$ is an unramified cover since $W_0-\{0\}$ does
  not intersect this branch locus.

  The bi-Lipschitz constant of the projection at a point of $V$ will
  be the bi-Lipschitz constant of the projection of the tangent plane
  at that point to the $(y,z)$--plane.  The tangent plane at the point
  is given by the orthogonal complement of the complex gradient
  $\overline{\nabla f}$. The following lemma is an exercise:
  \begin{lemma}
    For planes
  complex orthogonal to unit vectors $u_1$ and $u_2$ the orthogonal
  projection of one plane to the other has bi-Lipschitz constant
  $1/|\langle u_1,u_2\rangle|$, where $\langle,\rangle$ is hermitian
  inner product.  \qed
  \end{lemma}
 Returning to the proof of Lemma \ref{le:1}, the unit vectors in question
  are
  \begin{equation}
    \label{eq:grad}
   \frac{\overline{\nabla f}}{|{\nabla f}|}= \frac{(a\bar
    x^{a-1},b\bar y^{b-1},-b\bar
    z^{b-1})}{\sqrt{a^2|x^{a-1}|^2+b^2(|y^{b-1}|^2+|z^{b-1}|^2)}}
  \end{equation}
  and $(1,0,0)$. So the bi-Lipschitz constant is
$$\frac{\sqrt{a^2|x^{a-1}|^2+b^2(|y^{b-1}|^2+|z^{b-1}|^2)}}{|ax^{a-1}|}
\le
\frac{|\sqrt{a^2|x^{a-1}|^2+ b^2|x^{a-1}|^2}}{|ax^{a-1}|}
=\sqrt{1+\frac{b^2}{a^2}}\,,$$
where the inequality uses the defining inequality for $V_0$.  
\end{proof}

Note that $W_0-\{0\}$ and $W_1-\{0\}$ decompose $\C^2-\{0\}$ into two
subsets that meet along their boundaries. We claim:
\begin{lemma}\label{le:2} Assume $b> a$. Then, in a neighborhood of $0$, 
  $W_1-\{0\}$ consists of disjoint closed disk-bundle neighbourhoods
  of the lines $z=\omega y$. At small distance $r$ from the origin
  these disks have radius close to
  $cr^{(b-1)/(a-1)}$ for some $c>0$.
\end{lemma}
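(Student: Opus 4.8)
The plan is to analyse $W_1$ near each branch line separately and reduce to one model computation. First I would note that $W_1$ contains every branch line $z=\omega y$, $\omega\in\mu_b$: there $y^b-z^b=0$ while $|y|^{2b-2}+|z|^{2b-2}>0$, so the defining inequality holds. Moreover the unitary maps $(y,z)\mapsto(y,\omega z)$, $\omega\in\mu_b$, are isometries of $\C^2$ fixing both $|y^b-z^b|$ and $|y|^{2b-2}+|z|^{2b-2}$; they preserve $W_1$ and permute the branch lines, so it suffices to describe a neighbourhood of the single line $z=y$ and carry the result to the others by symmetry.

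Near that line I would use the transverse coordinate $w:=z-y$, so the line is $\{w=0\}$, and expand
\[
y^b-z^b=y^b-(y+w)^b=-b\,y^{b-1}w+O(|y|^{b-2}|w|^2).
\]
Feeding the leading term into the defining inequality of $W_1$ and using $|z|\approx|y|$ gives, to leading order,
\[
\bigl(b\,|y|^{b-1}|w|\bigr)^{2-2/a}\ \le\ |y|^{2b-2}+|z|^{2b-2}\approx 2\,|y|^{2b-2}.
\]
Solving for $|w|$ and simplifying with $2-2/a=2(a-1)/a$ collapses the exponent to $(b-1)/(a-1)$, yielding $|w|\le c\,|y|^{(b-1)/(a-1)}$ up to lower-order corrections. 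Since a point of $W_1$ within distance $r$ of the line has $|y|$ comparable to $r$, the transverse cross-section at distance $r$ is a planar region of radius asymptotic to $c\,r^{(b-1)/(a-1)}$, i.e.\ a $2$-disk of the stated radius, which exhibits the disk-bundle structure.

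The delicate point is justifying the two approximations $y^b-z^b\approx-b\,y^{b-1}w$ and $|z|\approx|y|$, that is, controlling the omitted higher-order terms. The saving observation is a bootstrap: because $b>a$, the exponent $(b-1)/(a-1)$ exceeds $1$, so any point of $W_1$ close to the line has $|w|/|y|$ of order $|y|^{(b-1)/(a-1)-1}$, which tends to $0$ as the origin is approached. Thus inside $W_1$ the transverse coordinate is automatically negligible compared with $|y|$, which both legitimises the linearisation of $y^b-z^b$ and renders the remainder genuinely lower-order, so the leading asymptotics of the radius are unaffected. I expect this self-consistency step to be the main obstacle: one must also verify the reverse inequality (the one defining $W_0$) once $|w|$ exceeds a fixed multiple of $|y|^{(b-1)/(a-1)}$, which is what confines $W_1$ to these thin tubes rather than something larger.

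Finally I would prove disjointness. At distance $r$ from the origin the $b$ branch lines are mutually separated by a distance of order $r$, while each tube has transverse radius of order $r^{(b-1)/(a-1)}$ with exponent strictly greater than $1$; hence for all sufficiently small $r$ the tubes are pairwise disjoint. Therefore $W_1-\{0\}$ is, in a neighbourhood of $0$, the disjoint union of the $b$ closed disk-bundle neighbourhoods of the lines $z=\omega y$, with disk radius close to $c\,r^{(b-1)/(a-1)}$, as claimed.
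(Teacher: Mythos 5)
Your proof is correct and takes essentially the same route as the paper's: reduce by the $\mu_b$-symmetry to the single line $z=y$, linearize the defining inequality of $W_1$ in a transverse coordinate (your $w=z-y$ plays the role of the paper's $2v\xi$), and solve to obtain the radius $c\,r^{(b-1)/(a-1)}$. Your bootstrap justification of the linearization and the explicit disjointness check are points the paper treats only implicitly (``to first order in $\xi$''), but they refine rather than change the argument.
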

\begin{proof}
  Since the $\mu_b$-action that multiplies just the $z$--coordinate by
  $\omega$ permutes the lines in question, it suffices to consider the
  line $y=z$. For fixed $y=v$ a transverse section to this line can be
  given by $(v(1-\xi),v(1+\xi))$ as $\xi$ varies. We restrict $\xi$ to
  be small so we are in a neighbourhood of the line and we consider
  the set of $(v(1-\xi),v(1+\xi))\in \C^2$ satisfying 
$$|(v^b(1-\xi)^b-v^b(1+\xi)^b)|^{(2-2/a)}\le
   |v(1-\xi)|^{2b-2}+|v(1+\xi)|^{2b-2}\,.$$
This inequality simplifies to:
$$|(1-\xi)^b-(1+\xi)^b|^{2(a-1)/a}\le
|v|^{2(b-a)/a}(|1-\xi|^{2b-2}+|1+\xi|^{2b-2})$$
To first order in $\xi$ this is
$$|2b\xi|^{2(a-1)/a}\le2|v|^{2(b-a)/a}\,,$$ which gives
$$\sqrt2|v\xi|\le c|v|^{(b-1)/(a-1)}\,,$$ for some constant
$c$ (specifically, $c=\sqrt2/(2^{(a-2)/(2a-2)}b)$). 

Since the radius of the transverse section at $v$ is the
maximum of $\sqrt2 |v\xi|$, the lemma follows.
 \end{proof}

 Denote the branch locus $y^b=z^b$ in $V$ or in the $(y,z)$--plane by
 $B$ (we use the same notation for both). 

 Note that the radius of the disk-bundle neighbourhood $W_1$ of $B$ at
 distance $r$ from the origin is of order $r^{(b-1)/(a-1)}$, which is
 $o(r)$.
 Thus if we choose a small $\delta>0$ and decompose $V$ conically with
 respect to the $(y,z)$--plane as follows:
\begin{align*}
  C_0&:=\{(x,y,z)\in V: d((y,z),B)\ge \delta|(y,z)|\}\\
  C_1&:= \{(x,y,z)\in V: d((y,z),B)\le \delta|(y,z)|\}\,,
\end{align*}
with images in $\C^2$:
\begin{align*}
 D_0&:=\{(y,z)\in \C^2:d((y,z),B)\ge\delta|(y,z)|\}\\
 D_1&:=\{(y,z)\in \C^2:d((y,z),B)\le\delta|(y,z)|\}\,,
\end{align*}
then, so long as we are in a small enough neighborhood of $0\in V$,
the sets $C_0$ and $D_0$ are subsets of $V_0$ and $W_0$.  Since $D_0$
is strictly conical and $C_0$ is a bi-Lipschitz covering of it, $C_0$
is metrically conical.  To complete the proof of the theorem we must
just show that the other piece, $C_1$, is also metrically conical,
since it follows from Corollary 0.2 of \cite{valette} that the union
is then metrically conical.

$C_1$ is a union of disk-bundle neighbourhoods of the lines $y=\omega
z$. As before, it suffices to focus just on the component $C'_1$,
which is a neighborhood of $y=z$. The proof of Theorem \ref{th:3} is
then completed by the following lemma.
\begin{lemma}\label{le:4}
  The map of $C'_1$ to $\C^2$ given by $$(x,y,z)\mapsto
  \Bigl(\frac{y+z}{2}~,~ e^{i\arg
    x}\sqrt{|x|^2+\frac{|y-z|^2}4}\Bigr)$$ is a bi-Lipschitz
  homeomorphism onto a metric cone in $\C^2$.
\end{lemma}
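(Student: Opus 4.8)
The plan is to introduce coordinates on $C'_1$ adapted to the branch line $y=z$ and read the map off in them. Set $u=\frac{y+z}2$ and $w=\frac{y-z}2$, so that $(y,z)=(u+w,u-w)$ and the defining equation becomes $x^a=-(y^b-z^b)=-\bigl((u+w)^b-(u-w)^b\bigr)$, the right-hand side being homogeneous of degree $b$ in $(u,w)$ and equal to $-2b\,u^{b-1}w\bigl(1+O(w^2/u^2)\bigr)$. Hence near the line the projection $w\mapsto x$ is exactly the $a$--fold branched cover of Lemma \ref{le:1}, $w$ is a holomorphic function $w(u,x)$ of $(u,x)$ vanishing to order $a$ in $x$, and $(u,x)$ serve as holomorphic coordinates on $C'_1$. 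In these coordinates $\Phi$ is $(u,x)\mapsto\bigl(u,\zeta\bigr)$ with $\zeta=e^{i\arg x}\sqrt{|x|^2+|w(u,x)|^2}$, and the metric induced from $\C^3$ pulls back to $ds^2=|dx|^2+2|du|^2+2|dw|^2$.

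First I would record the sizes that govern everything. On $C'_1$ the condition $d((y,z),B)\le\delta|(y,z)|$ is, for the component at the line $y=z$, exactly $|w|\le\delta'|u|$ with $\delta'=\delta/\sqrt{1-\delta^2}$, so the image of $C'_1$ in the $(u,w)$--plane is the genuine cone $\{|w|\le\delta'|u|\}$. From $|w|\approx\frac{|x|^a}{2b|u|^{b-1}}$ one gets $|x|\le C|u|^{b/a}=o(|u|)$ throughout $C'_1$, so that at the outer edge $|w|\approx\delta'|u|$ while $|x|=o(|u|)$; consequently $|\zeta|=\sqrt{|x|^2+|w|^2}$ is comparable to $\max(|x|,|w|)$ and ranges up to $\approx\delta'|u|$. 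Thus I expect the image to be, up to lower order terms, the straight cone $K=\{(u,\zeta):|\zeta|\le\delta'|u|\}$.

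With this in hand the homeomorphism statement is quick: for fixed $u$ the fibre map $x\mapsto\zeta$ preserves argument ($\arg\zeta=\arg x$) and has modulus $|x|\sqrt{1+|w|^2/|x|^2}$ strictly increasing in $|x|$, hence is a radial homeomorphism of the fibre disk onto a disk of radius $\approx\delta'|u|$; continuity across the axis $x=0$ is clear since $\zeta\to0$ there. To identify the image as a metric cone I would compose $\Phi$ with the radial adjustment in the $\zeta$--fibre that straightens the fibre radius to $\delta'|u|$, producing a bi-Lipschitz homeomorphism onto $K$, which is manifestly metrically conical.

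The main work, and the expected obstacle, is the bi-Lipschitz estimate for the differential of $\Phi$ relative to $ds^2$ on the source and the flat metric $|du|^2+|d\zeta|^2$ on the target, uniformly on $C'_1$. The delicate feature is that $\zeta$ changes character: near the axis $|x|\ge|w|$ and $\zeta\approx x$ tracks the $|dx|^2$ part of the metric, whereas near the outer boundary $|w_x|\approx\frac{|x|^{a-1}}{2|u|^{b-1}}$ blows up, $|w|\ge|x|$, and $\zeta\approx e^{i\arg x}|w|$, so that $d\zeta$ must instead be compared with the dominant $2|dw|^2$ term. I would split $C'_1$ into the two regimes $|x|\ge|w|$ and $|x|\le|w|$, bound the singular values of $d\Phi$ from above and below in each (checking in particular that the mixed derivatives $\partial\zeta/\partial u$, which enter through $w_u=-(b-1)w/u$ with $|w_u|$ bounded by $(b-1)\delta'$, remain subordinate), and verify that the bounds match across the transition and extend across the axis $x=0$. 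The necessary derivative bounds all follow from the homogeneity of $(u+w)^b-(u-w)^b$ together with the cone constraint $|w|\le\delta'|u|$.
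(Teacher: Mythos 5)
Your setup is exactly the paper's: the coordinates along/transverse to the branch line (your $(u,w)$ are the paper's $(v,\pm u)$), the observation that $(u,x)$ serve as coordinates on $C'_1$ with $w$ holomorphic and vanishing to order $a$ in $x$, the rewriting of the map as $(u,x)\mapsto(u,\zeta)$, the bound $|x|=O(|u|^{b/a})=o(|u|)$, and the identification of the image with the straight cone up to lower-order terms. All of that is correct, as is the fibrewise injectivity argument. But the lemma \emph{is} the uniform two-sided Lipschitz estimate, and at precisely that point your proposal stops being a proof and becomes a plan: ``I would split $C'_1$ into the two regimes \dots bound the singular values \dots and verify that the bounds match.'' Nothing in the proposal actually establishes a lower bound on $|d\Phi|$, and the closing assertion that ``the necessary derivative bounds all follow from the homogeneity \dots together with the cone constraint'' undersells the difficulty: homogeneity easily gives the upper bounds on $|w_x|$ and $|w_u|$, but the hard direction is the \emph{lower} bound, where one must rule out cancellation between the $dx$-contribution and the $d|w|$-contribution to $d\zeta$ (e.g.\ a naive estimate $|d\zeta|\ge\rho\,|dx|-|x|\,|d\rho|$ with $\zeta=x\rho$ degenerates for radial variations).

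The paper's proof shows what is actually required. For the fibrewise model $u=Dx^a$ it computes both metrics exactly in polar coordinates (\eqref{eq:m1} and \eqref{eq:m2}); the crucial structural fact, visible in \eqref{eq:df}, is that $df=e^{i\theta_x}(A\,dr_x+iB\,d\theta_x)$ with $A,B$ real, so the radial and angular parts of $df$ are orthogonal and no cancellation can occur. The coefficient ratios are then monotone, with the worst radial distortion $(a+1)^2/4a$ occurring at $|D|^2r_x^{2a-2}=1/a$ --- i.e.\ in your transition region $|x|\asymp|w|$, exactly where neither of your approximations $\zeta\approx x$ nor $\zeta\approx e^{i\arg x}|w|$ is sharp; this yields the uniform constant $a$ of Lemma \ref{le:5}. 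The passage from the fibrewise model to the actual surface (your ``mixed derivatives'') is then handled not by raw singular-value estimates but by the gradient-comparison device of Lemma \ref{le:1}: the projection between the tangent planes of $C$ and the product surface $C_{(x_0,u_0,v_0)}$ has bi-Lipschitz constant bounded via \eqref{eq:ick} by $\sqrt{1+((b-1)\delta)^2}$, and a similar comparison disposes of the higher-order term $u^3g(u,v)$ that you propose to carry along exactly. Your two-regime plan is workable and, if executed, would essentially reproduce this computation; but as written the core estimate is absent, so the proposal does not yet prove the lemma.
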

The rest of this section is devoted to proving this lemma. We first
introduce more convenient coordinates on $C'_1$. Define
$$
  u:=\frac{z-y}{2}\qquad v:=\frac{z+y}{2}
$$
The inverse change of coordinates is 
$$
  y:={v-u}\qquad z:={v+u}
$$
Thus $v$ is as in the proof of Lemma \ref{le:2} and $u$ equals $v\xi$
in the notation of that proof. 
$C'_1$ is given by
\begin{equation}
  \label{eq:c}
  C'_1=\{(x,u,v)\in \C^3:x^a=(v+u)^b-(v-u)^b,|u|\le\delta|v|\}
\end{equation}
and the map of Lemma \ref{le:4} is 
\begin{equation}
  \label{eq:map}
F(x,u,v) =
  \Bigl(r_ve^{i\theta_v}~,~ e^{i\theta_x}\sqrt{r_x^2+r_u^2}\,\Bigr)\,,
\end{equation}
where we are using polar coordinates
$$x=r_xe^{i\theta_x},\quad u=r_ue^{i\theta_u},\quad
v=r_ve^{i\theta_v}\,.$$
Since for $v$ small the size of $x$ is negligible with respect to
$\delta|v|$, the image of $F$ is extremely close to the conical set
$\{(u,v):|u|\le\delta v\}$, and the main issue is checking the
bi-Lipschitz bound.

We can rewrite the defining equation $x^a=(v+u)^b-(v-u)^b$ in
\eqref{eq:c} as
\begin{equation}
  \label{eq:defeq}
  x^a=2buv^{b-1} +u^3g(u,v),
\end{equation}
with $g(u,v)$ a polynomial of degree $b-3$.  If we choose $\delta$
very small in \eqref{eq:c} then $u^3g(u,v)$ will be virtually
negligible, so to simplify calculation we will omit this term for now
and work with
\begin{equation}
  \label{eq:C}
  C:=\{(x,u,v)\in \C^3:x^a=2buv^{b-1},|u|\le\delta|v|\}
\end{equation} 
instead of $C'_1$. It is fairly clear that up to bi-Lipschitz equivalence this
changes nothing, but we come back to this issue later.

It is helpful to first consider a single
transverse section to the $v$-line, so, writing $D=1/(2bv^{b-1})$, we
start by proving
\begin{lemma}\label{le:5}
  For any integer $a\ge 1$ and any $D\in \C^*$ the map $f$
of the graph
$  S_D:=\{(x,u):u=Dx^a\}
$
to $\C$ given by 
\begin{equation}
  \label{eq:f}
  f(x,u)=e^{i\theta_x}\sqrt{r_x^2+r_u^2}=e^{i\theta_x}
\sqrt{r_x^2+|D|^2r_x^{2a}}\,,
\end{equation}
is a bi-Lipschitz homeomorphism with bi-Lipschitz bound $\le a$.
\end{lemma}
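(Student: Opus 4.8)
The plan is to compute the two relevant Riemannian metrics explicitly in polar coordinates and compare their coefficients direction by direction. Parametrize $S_D$ by $x=re^{i\theta}$, writing $r=r_x$ and $\theta=\theta_x$; since $u=Dx^a$ is determined by $x$, this is a global chart on $S_D\setminus\{0\}$. In these coordinates $f$ takes the form $(r,\theta)\mapsto(\rho(r),\theta)$, where $\rho(r)=\sqrt{r^2+|D|^2r^{2a}}$ is exactly the ambient distance from $(x,u)$ to the origin. Because $\rho$ is strictly increasing on $[0,\infty)$ with $\rho(0)=0$, the map is a bijection onto $\C$ and extends continuously at the origin, so it is a homeomorphism; the entire content of the lemma is therefore the bi-Lipschitz estimate.

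First I would write down the first fundamental form of $S_D$. Using $du=aDx^{a-1}\,dx$ we get $|du|^2=a^2|D|^2r^{2a-2}|dx|^2$ and $|dx|^2=dr^2+r^2\,d\theta^2$, so the induced metric is
$$g_S=(1+a^2|D|^2r^{2a-2})(dr^2+r^2\,d\theta^2)\,.$$
The pullback under $f$ of the flat metric on $\C$, written in the target polar coordinates $(\rho,\theta)$ with $\rho=\rho(r)$, is $g_T=\rho'(r)^2\,dr^2+\rho(r)^2\,d\theta^2$. Both metrics are diagonal in $(r,\theta)$ and share the orthogonal frame $\partial_r,\partial_\theta$, so the singular values of $df$ are the square roots of the two coefficient ratios, and the bi-Lipschitz bound of $df$ is the supremum over $r$ of the larger of each ratio and its reciprocal.

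Then I would carry out the two one-variable estimates, abbreviating $t=|D|^2r^{2a-2}\ge 0$. For the angular direction the ratio of coefficients is $(1+a^2t)/(1+t)$, which lies in $[1,a^2)$; this is the term that produces the bound, giving distortion $<a$ with supremum exactly $a$. For the radial direction one computes $\rho'^2=(1+at)^2/(1+t)$, so the ratio is $(1+at)^2/\bigl((1+t)(1+a^2t)\bigr)$; the numerator minus the denominator equals $-t(a-1)^2\le 0$, and maximizing $t/\bigl((1+t)(1+a^2t)\bigr)$, whose maximum occurs at $t=1/a$, shows the reciprocal ratio is at most $(a+1)^2/(4a)$, hence the radial distortion is at most $(a+1)/(2\sqrt a)$.

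Finally, since $(a+1)/(2\sqrt a)\le a$ for all $a\ge 1$, every singular value of $df$ together with its reciprocal is bounded by $a$; and because $S_D$ carries its inner (length) metric while $\C$ is a length space, a uniform pointwise bound on $df$ upgrades to an $a$-bi-Lipschitz bound on $f$ itself, which is the claim. The main obstacle I anticipate is the radial direction: unlike the angular term, where the bound $a$ is transparent, one must verify that the nonlinear radial reparametrization $\rho(r)$ never distorts lengths by more than $a$, and it is precisely there that the optimization at $t=1/a$ and the inequality $(a+1)/(2\sqrt a)\le a$ do the work.
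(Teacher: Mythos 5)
Your proposal is correct and follows essentially the same route as the paper's proof: both compute the induced metric $(1+a^2|D|^2r_x^{2a-2})(dr_x^2+r_x^2d\theta_x^2)$ on $S_D$ and the pullback metric under $f$, then compare the angular coefficients (ratio tending to $a^2$) and the radial coefficients (ratio extremal at $|D|^2r_x^{2a-2}=1/a$ with value $(a+1)^2/(4a)\le a^2$) to conclude the bound $a$. Your added remarks --- phrasing the comparison via singular values of $df$ and noting that the pointwise bound upgrades to a global bi-Lipschitz bound because both spaces are length spaces --- only make explicit what the paper leaves implicit.
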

\begin{proof}
The metric in $S_D$ is given by
\begin{equation}
  \label{eq:m1}
\begin{aligned}
    ds^2&=|dx|^2+|du|^2=|dx|^2 + |aDx^{a-1}|^2|dx|^2\\
&=({dr_x}^2+r_x^2{d\theta_x}^2)(1+a^2|D|^2r_x^{2a-2})\,.
\end{aligned}
\end{equation}
On the other hand, differentiating $f$ gives
\begin{equation}
  \label{eq:df}
  df=e^{i\theta_x}\Bigl(\frac{r_x+a|D|^2r_x^{2a-1}}
{\sqrt{r_x^2+|D|^2r_x^{2a}}}\,dr_x+
i\sqrt{r_x^2+|D|^2r_x^{2a}}\,d\theta_x\Bigr)\,,
\end{equation}
so the metric pulled back by $f$ is 
\begin{equation}
  \label{eq:m2}
  |df|^2=\frac{(1+a|D|^2r_x^{2a-2})^2}{1+|D|^2r_x^{2a-2}}\;{dr_x}^2 ~+~ (1+|D|^2r_x^{2a-2})\;r_x^2{d\theta_x}^2\,.
\end{equation}
The ratio of coefficients of $r_x^2{d\theta_x}^2$ in \eqref{eq:m1} and
\eqref{eq:m2} increases steadily from $1$ to $a^2$ as $r_x$ increases
from $0$ to $\infty$.  And it is an exercise to check that the ratio
of the coefficient of ${dr_x}^2$ increases from $1$ to
$\frac{(a+1)^2}{4a}$ and then decreases again to $1$ as
$|D|^2r_x^{2a-2}$ goes from $0$ to $\infty$ via $1/a$.  Since
$\frac1{a^2}\le\frac{(a+1)^2}{4a}\le a^2$ for all $a\ge1$, it follows
that the ratio $ds^2/|df|^2$ is bounded below and above by
$\frac1{a^2}$ and $a^2$, so the bi-Lipschitz constant of $f$ is
bounded by $a$.
\end{proof}
Consider now a point $(x_0,u_0,v_0)\in C$. At this point we have two
surfaces: the surface $C=\{x^a=2buv^{b-1}\}$ that we are interested
in, and the surface
$$C_{(x_0,u_0,v_0)}:=\{(x,u,v): x^a=2buv_0^{b-1}\}$$
which is the product of the $v$-plane with the curve of Lemma
\ref{le:5}.  For each of these surfaces we can consider the local
bi-Lipschitz constant of the map
$F(x,u,v)=(v,e^{i\theta_x}\sqrt{r_x^2+r_u^2})$. For $C_{(x_0,u_0,v_0)}$
we have already computed this and it is uniformly bounded by $a$. The
constants for
$C$ and $C_{(x_0,u_0,v_0)}$ have ratio bounded by the bi-Lipschitz
  constant of the projection between the tangent spaces of these two
  surfaces at the given point. So it remains to compute this latter
  number and show it is uniformly bounded.

  The gradients of the two surfaces at the given point are
  $$(ax_0^{a-1}, 2bv_0^{b-1}, 2b(b-1)u_0v_0^{b-2})\quad\text{and}\quad
  (ax_0^{a-1},
  2bv_0^{b-1},0)$$ respectively.  Referring to Lemma \ref{le:2} we see
  that the number in question is
  \begin{equation}
    \label{eq:ick}
        \frac{\sqrt{|ax_0^{a-1}|^2+ |2bv_0^{b-1}|^2+
        |2b(b-1)u_0v_0^{b-2}|^2}} {\sqrt{|ax_0^{a-1}|^2+
        |2bv_0^{b-1}|^2}}
  \end{equation}
  Note that the additional term in the numerator is at most
  $((b-1)\delta)^2$ times the term preceding it, so the whole
  expression in \eqref{eq:ick} is bounded by
  $\sqrt{1+((b-1)\delta)^2}$. This completes the proof of this point.

  Finally, we promised to revisit the issue of replacing $C'_1$ by $C$
  at equation \eqref{eq:C}. This is a similar argument to the one we
  have just completed --- one checks that the projection in the
  $x$--direction between $C$ and $C'_1$ has bi-Lipschitz constant bounded 
by $(1+\delta^2g(\delta))$ for a certain fixed degree $b-3$ polynomial
in $\delta$, and can hence be made arbitrarily close to 1 by choosing
$\delta$ small enough.
\section{Separating  sets and the Brian\c{c}on-Speder family}

For an algebraic germ $(X,p)$, the
\emph{$r$--density} of $X$ at $p$ is defined as
$$\operatorname{density}_r(X,p)=\lim_{\epsilon\to 0}\frac
{\operatorname{vol}(X\cap B_\epsilon(p))}{\epsilon^r}$$
Thus, the volume growth number, defined in the Introduction, is
$$\mu(X,p)=\sup\{r>0:\operatorname{density}_r(X,p)=0\}\,.$$ 
Although the value of the $r$--density is not generally a bi-Lipschitz
invariant, its vanishing or non-vanishing is, which is why $\mu$ also
is a bi-Lipschitz invariant. If $r=\dim X$ we speak simply of the
\emph{density}. 

Let $(X,p)$ be an $n$-dimensional germ and
$(Y,p)\subset (X,p)$. Then $(Y,p)$ is called a \emph{separating set} if
\begin{itemize}
\item $Y$ divides $X$ into two pieces $X_1$ and $X_2$, each containing $p$;
\item $\operatorname{density}_{n-1}(Y)=0$, $\operatorname{density}_{n}(X_1)\ne0$, $\operatorname{density}_{n}(X_2)\ne0$. 
\end{itemize}
In view of the comments above, the existence of a separating set is a
bi-Lipschitz invariant. It is also an obstruction to metric
conicalness:
\begin{proposition}
  A metric cone cannot contain a separating set.
\end{proposition}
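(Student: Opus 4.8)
The plan is to work directly inside the metric cone $X=C(L)$, exploiting its self-similarity: the homothety $h_\lambda$ that scales the cone radius by $\lambda$ is a similarity, hence multiplies $k$--dimensional Hausdorff measure by $\lambda^k$. The heuristic is that separating two pieces of positive $n$--density forces, at each scale $\epsilon$, a codimension-one ``wall'' whose $(n-1)$--volume is of order $\epsilon^{n-1}$, whereas a separating set $Y$ satisfies $\operatorname{density}_{n-1}(Y)=0$ and therefore has $(n-1)$--volume $o(\epsilon^{n-1})$. Making this precise yields a contradiction, so no separating set can exist.

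First I would avoid the cone point by working in the annuli $A_\epsilon:=B_\epsilon(p)\setminus B_{\epsilon/2}(p)$. From $\operatorname{density}_n(X_i)=d_i>0$ and the definition of density one has $\operatorname{vol}_n(X_i\cap B_\epsilon)=d_i\epsilon^n+o(\epsilon^n)$, so
$$\operatorname{vol}_n(X_i\cap A_\epsilon)=d_i(1-2^{-n})\epsilon^n+o(\epsilon^n)\ge c_i\,\epsilon^n$$
for all small $\epsilon$, where $c_i:=\tfrac12 d_i(1-2^{-n})>0$. Next I apply the homothety $h_{1/\epsilon}$, which carries $A_\epsilon$ onto the fixed annulus $A_1$. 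It sends $X_i\cap A_\epsilon$ to a set $X_i^\epsilon\subset A_1$ with $\operatorname{vol}_n(X_i^\epsilon)\ge c_i$, and $Y\cap A_\epsilon$ to a set $Y^\epsilon\subset A_1$ with $\operatorname{vol}_{n-1}(Y^\epsilon)=\epsilon^{-(n-1)}\operatorname{vol}_{n-1}(Y\cap A_\epsilon)$. Since $Y$ divides $X$ into the pieces $X_1,X_2$, any arc in $A_1$ joining $X_1^\epsilon$ to $X_2^\epsilon$ must meet $Y^\epsilon$; thus $Y^\epsilon$ separates $X_1^\epsilon$ from $X_2^\epsilon$ inside $A_1$.

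Now I would invoke a relative isoperimetric inequality in the fixed compact region $A_1$: a closed set separating $A_1$ into two parts of $n$--volume at least $\min(c_1,c_2)$ has $(n-1)$--measure bounded below by a constant $\kappa>0$ depending only on $A_1$ and on $\min(c_1,c_2)$, and in particular independent of $\epsilon$. Concretely, $Y^\epsilon$ contains the part of the reduced boundary of $X_1^\epsilon$ lying in the interior of $A_1$, so $\operatorname{vol}_{n-1}(Y^\epsilon)$ bounds the relative perimeter of $X_1^\epsilon$ from below, and the relative isoperimetric inequality gives $\operatorname{vol}_{n-1}(Y^\epsilon)\ge\kappa$. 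Scaling back, $\operatorname{vol}_{n-1}(Y\cap A_\epsilon)\ge\kappa\,\epsilon^{n-1}$, whence $\operatorname{vol}_{n-1}(Y\cap B_\epsilon)\ge\kappa\,\epsilon^{n-1}$, so that $\operatorname{density}_{n-1}(Y)\ge\kappa>0$, contradicting the defining property $\operatorname{density}_{n-1}(Y)=0$ of a separating set.

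The density asymptotics and the bookkeeping under scaling are routine. The main obstacle is the relative isoperimetric inequality with a \emph{uniform} constant, i.e.\ the assertion that separating two pieces of definite volume in $A_1$ costs a definite amount of $(n-1)$--measure; this requires enough regularity of the cone and of its link $L$ (for instance that $L$ is a compact subanalytic set, so that $A_1$ supports such an inequality) together with a suitable measure-theoretic reading of ``separating set'', so that $Y^\epsilon$ genuinely controls the relative perimeter of one of the pieces.
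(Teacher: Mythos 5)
The first thing to say is that the paper contains no proof of this proposition to compare yours against: it is stated and immediately deferred, with the semialgebraic case attributed to \cite{BF08} and the general case (Kleiner's removal of the semialgebraicity hypotheses) postponed to \cite{BFN-inprep}. So your sketch has to be judged on its own. Its overall shape --- rescale the annulus $A_\epsilon$ to a fixed annulus $A_1$ by the cone homothety, extract a uniform lower bound on $\mathcal H^{n-1}$ of the separating wall from a relative isoperimetric inequality, scale back, and contradict $\operatorname{density}_{n-1}(Y)=0$ --- is the natural geometric-measure-theoretic sharpening of the ``Cheeger cycle'' idea described in the introduction, and working with Hausdorff measure and perimeter rather than with semialgebraic data is exactly what one needs if the bi-Lipschitz equivalence to the cone is not assumed semialgebraic. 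In outline the argument is sound, and the scaling bookkeeping is correct as written.

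Two substantive points remain. First, the isoperimetric step you flag as the main obstacle is where a genuine hypothesis hides: the link $L$ must be connected. If $L$ is disconnected, the proposition as you have set it up is false --- the apex alone divides $C(L)$ into two pieces of positive $n$-density while having vanishing $(n-1)$-density --- and your isoperimetric inequality on $A_1$ is precisely the step that fails, since $A_1$ is then disconnected. In the intended application $L$ is the link of a normal surface singularity, a closed connected smooth $3$-manifold, so $A_1$ is a compact connected Riemannian manifold with boundary and the relative isoperimetric inequality is standard; your worry about subanalytic regularity of $L$ is not the issue, connectedness is. Second, the bridge from topological separation to a perimeter bound should be made via Federer's criterion: the frontier of $X_1$ in $X$ lies in $Y$, the essential boundary lies in the frontier, so if $\mathcal H^{n-1}(Y^\epsilon)<\infty$ then $X_1^\epsilon$ has finite relative perimeter bounded by $\mathcal H^{n-1}(Y^\epsilon)$ (and if it is infinite there is nothing to prove); as written, your appeal to the reduced boundary presupposes finite perimeter. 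Finally, note that your expansions $\operatorname{vol}_n(X_i\cap B_\epsilon)=d_i\epsilon^n+o(\epsilon^n)$ assume the densities exist as limits; that is fine for the literal statement, but when the proposition is actually used one transports a separating set of a germ $(V,p)$ into the cone by a bi-Lipschitz map that need not be semialgebraic, after which only two-sided bounds $c\epsilon^n\le\operatorname{vol}_n\le C\epsilon^n$ survive. Your argument tolerates this (take a thicker annulus, with inner radius $\theta\epsilon$ for $\theta$ small depending on the bi-Lipschitz constant), but spelling that out is exactly the delicate point the paper credits to Kleiner, and your sketch currently passes over it in silence.
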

With extra conditions on semialgebraicity of the conical structure and
sets in question this is in \cite{BF08}. Bruce Kleiner showed us how
to eliminate the extra assumptions; details will appear in
\cite{BFN-inprep}.

In \cite{BF08} a separating set was used to show the non-conicalness
of the $A_k$ singularity for odd $k\ge 3$. We here give a different
construction that works for all $k\ge 2$. Note that these are also
covered by Theorem \ref{th:1}, where the proof is by fast cycles; we
describe the separating set approach to illustrate its
usefulness.

Consider therefore $A_k$, written in the form
$V=\{(x,y,z):z^{k+1}=xy\}$. The subset $\{z=0\}\cap V$ has components
$L_1=\{x=0\}\cap V$ and $L_2=\{y=0\}\cap V$. Consider their
\emph{conflict set}
$$Y=\{p\in V:d(p,L_1)=d(p,L_2)\,,$$
where $d()$ is distance. A symmetry argument shows that $Y$ separates
$V$ into two isometric pieces $V_1$ and $V_2$, so 
$$\operatorname{density}_4(V_1)=\operatorname{density}_4(V_2)=\frac12
\operatorname{density}_4(V)\ne 0\,.$$ Any smooth arc on $Y$ that
approaches $0$ has $|x|=|y|=|z|^{(k+1)/2}$ and must therefore approach
tangent to the $z$--axis, so the tangent cone of $Y$ is the
$z$--axis. However, if a semialgebraic set $Y$ of dimension $n$ has
tangent cone of lower dimension, then $\operatorname{density}_n(Y)=0$
(see Federer \cite{federer}). Thus $Y$ is a separating set.

The Brian\c{c}on-Speder family \cite{BS75}
$$V_t=\{(x,y,z): z^{15}+zy^7+x^5+txy^6=0\}$$ is a family of weighted
homogeneous surface singularities with weights $(3,2,1)$ which is
topologically a trivial family but which does not satisfy the Whitney
conditions. By Theorem \ref{th:1}, no $V_t$ is metrically conical.
\begin{theorem}
  The bi-Lipschitz type is non-constant in the Brian\c{c}on-Speder
  family: $V_t$ has a separating set for $t\ne0$ and none for $t= 0$.
\end{theorem}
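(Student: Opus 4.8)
The plan is to prove the two halves of the statement separately, using the separating-set machinery just developed for the $A_k$ singularity as a template. The strategy in both cases is to identify a natural codimension-one subset of $V_t$ as a \emph{conflict set} between two curves lying in $V_t$, and then to compute the dimension of its tangent cone at $0$: if the tangent cone drops dimension, Federer's result gives $\operatorname{density}_{n-1}(Y)=0$, and a symmetry or volume argument gives the nonvanishing of the densities of the two complementary pieces, exhibiting a separating set and hence (by the Proposition above, together with Theorem \ref{th:1} to know we are not accidentally conical for trivial reasons) obstructing a constant bi-Lipschitz type.

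\medskip\noindent\textbf{The case $t=0$.} Here $V_0=\{z^{15}+zy^7+x^5=0\}$, and I would first exploit the extra symmetry present when $t=0$. The idea is that the two curves whose conflict set we might hope to use become interchanged by an isometric involution of $V_0$ (coming from a weighted-homogeneous automorphism respecting the $(3,2,1)$ weights), so that any candidate conflict set $Y$ has a tangent cone of \emph{full} dimension $3$ rather than a strictly lower-dimensional one. By Federer, $\operatorname{density}_3(Y)$ is then generically nonzero, so $Y$ fails the defining density condition for a separating set. To make this rigorous I would need to argue that \emph{every} putative separating set must, up to bi-Lipschitz equivalence, have full-dimensional tangent cone; since the existence of a separating set is a bi-Lipschitz invariant, it suffices to show that the metric structure of $V_0$ forbids a density-zero separating hypersurface. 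I expect the cleanest route is to use the weighted-homogeneous $\C^*$--action on $V_0$: the link is then a Seifert-fibered space with enough symmetry that no separating set of the required type can exist.

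\medskip\noindent\textbf{The case $t\neq 0$.} Here the term $txy^6$ breaks the symmetry, and the plan is to produce two curves $L_1,L_2\subset V_t$ through $0$ whose conflict set $Y=\{p\in V_t: d(p,L_1)=d(p,L_2)\}$ has tangent cone of dimension strictly less than $3$. The natural candidates are the two branches of $V_t$ lying over suitable coordinate axes, or more precisely the two arcs along which $V_t$ is ``closest to itself''; along such arcs one reads off the leading exponents from the weighted-homogeneous structure (weights $(3,2,1)$ assign $x,y,z$ the orders $3,2,1$ along a shrinking family), and I would check that any smooth arc on $Y$ approaching $0$ is forced to be tangent to a single line, so that the tangent cone of $Y$ is $1$-dimensional. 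Then Federer gives $\operatorname{density}_3(Y)=0$, while the two complementary pieces $X_1,X_2$ each have nonzero density because $Y$ is genuinely a separating hypersurface splitting $V_t$ into two solid pieces of comparable volume. This verifies $Y$ is a separating set for $t\ne0$.

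\medskip The main obstacle I anticipate is the $t\neq0$ tangent-cone computation: one must choose the right pair of curves $L_1,L_2$ and then analyze the asymptotics of the conflict set $d(\cdot,L_1)=d(\cdot,L_2)$ near $0$, which requires understanding how the distance functions to the two curves compare to leading order along every approach direction — essentially a Puiseux-expansion estimate in the weighted-homogeneous coordinates. Showing that this forces tangency to a single line (dimension drop), rather than merely bounding the density, is the delicate quantitative step; the symmetry argument for $t=0$ is comparatively soft by contrast. Because the paper explicitly defers the details to \cite{BFN-inprep}, I would present this as a sketch, carrying out the $t=0$ symmetry argument in full and reducing the $t\neq0$ case to the single tangent-cone estimate stated precisely, with the heavy Puiseux analysis postponed.
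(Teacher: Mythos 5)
The paper itself does not prove this theorem: it explicitly defers all details to \cite{BFN-inprep}, offering only the $A_k$ conflict-set construction as a model, so the comparison can only be made against that indicated method. Your sketch of the $t\neq0$ half is consistent with it (build the conflict set of two curves in $V_t$, show its tangent cone at $0$ is a line, apply Federer, verify the density conditions on the two complementary pieces), although you leave the decisive step --- which pair of curves to take and why the conflict set's tangent cone degenerates --- entirely open; that is roughly the same level of incompleteness as the paper's own sketch.

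The genuine gap is your $t=0$ half, and it is fatal as stated. First, the logic: to show $V_0$ has \emph{no} separating set you must rule out \emph{every} subset $Y\subset V_0$ satisfying the two density conditions, not merely show that conflict sets of natural curve pairs fail; a separating set need not be a conflict set of anything, so establishing that candidate conflict sets have full-dimensional tangent cones proves nothing (and you are also using Federer in the converse direction to the one the paper cites). Second, your proposed repair --- invoking the weighted-homogeneous $\C^*$--action and the Seifert-fibered link of $V_0$ --- cannot work even in principle: \emph{every} member of the Brian\c{c}on-Speder family is weighted homogeneous with the same weights $(3,2,1)$, all the links are homeomorphic Seifert-fibered manifolds, and the family is topologically trivial. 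Any argument using only this shared structure applies verbatim to $V_t$ with $t\neq0$, where a separating set \emph{does} exist by the first half, so your argument would prove too much. Distinguishing $t=0$ from $t\neq0$ requires an invariant that sees the metric structure rather than the topological or equivariant one (in the authors' subsequent work this is done by relating separating sets to the metric tangent cone). Note also that you have the relative difficulty backwards: the non-existence statement for $t=0$ is the deep half of the theorem, not the ``comparatively soft'' one.
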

 As
already mentioned, details will be in \cite{BFN-inprep}.

\end{document}